\documentclass{amsart}
\usepackage[foot]{amsaddr}
\usepackage{amssymb,amsmath,amsfonts,mathptmx,cite,mathrsfs,graphicx,gastex,rotating,url,color}
\usepackage{eucal}

\DeclareMathOperator{\alf}{alph}

\DeclareMathOperator{\dom}{dom}
\DeclareMathOperator{\var}{var}

\theoremstyle{plain}
\newtheorem{theorem}{Theorem}
\newtheorem{proposition}[theorem]{Proposition}
\newtheorem{lemma}[theorem]{Lemma}
\newtheorem{corollary}[theorem]{Corollary}

\theoremstyle{remark}
\newtheorem{remark}[theorem]{Remark}

\DeclareSymbolFont{rsfscript}{OMS}{rsfs}{m}{n}
\DeclareSymbolFontAlphabet{\mathrsfs}{rsfscript}

\def\ib{identity basis}

\newcommand{\fb}{finitely based}
\newcommand{\nfb}{non\-finitely based}
\newcommand{\fs}{finite structure}
\newcommand{\sgp}{semi\-group}
\newcommand{\sgps}{semi\-groups}
\newcommand{\fss}{finite \sgps}
\newcommand{\fbp}{finite basis problem}
\newcommand{\infb}{inherently non\-finitely based}
\newcommand{\snfb}{strongly non\-finitely based}
\newcommand{\fg}{finitely generated}
\newcommand{\pat}{partial transformation}
\newcommand{\pts}{partial transformations}

\newtheorem*{Problem}{Problem}
\newtheorem*{Problem44}{Problem 4.4}

\makeatletter

\renewcommand*\subjclass[2][2010]{\def\@subjclass{#2}\@ifundefined{subjclassname@#1}{\ClassWarning{\@classname}{Unknown edition (#1) of Mathematics Subject Classification; using '2010'.}}{\@xp\let\@xp\subjclassname\csname subjclassname@#1\endcsname}}

\renewcommand{\subjclassname}{\textup{2010} Mathematics Subject Classification}

\makeatother

\predisplaypenalty=0

\begin{document}

\title{Strongly nonfinitely based monoids}
\author[S. V. Gusev]{Sergey V. Gusev}
\address[S. V. Gusev, M. V. Volkov]{Institute of Natural Sciences and Mathematics\\
Ural Federal University\\ 620000 Ekaterinburg, Russia}
\email{sergey.gusb@gmail.com}
\email{olga.sapir@gmail.com}
\email{m.v.volkov@urfu.ru}

\author[O. B. Sapir]{Olga B. Sapir}
\address[O. B. Sapir]{Ben-Gurion University of the Negev, Beer Sheva, Israel}

\author[M. V. Volkov]{Mikhail V. Volkov}

\thanks{S. V. Gusev and M. V. Volkov were supported by the Ministry of Science and Higher Education of the Russian Federation, project FEUZ-2023-2022.}

\begin{abstract}
We show that the 42-element monoid of all partial order preserving and extensive injections on the 4-element chain is not contained in any variety generated by a finitely based finite semigroup.
\end{abstract}

\keywords{Variety, Finite basis problem, Inherently nonfinitely based semigroup, Strongly nonfinitely based semigroup, Catalan monoid}

\subjclass{20M07}

\maketitle

\section{General Background: Identities and the Finite Basis Problem}
\label{sec:intro1}

The idea of an \emph{identity} or a \emph{law} is very basic and is arguably one of~the very first abstract ideas that students come across when they start learning mathematics. We mean laws like the \emph{commutative law of
addition}:
\begin{center}
A sum isn't changed at rearrangement of its addends.
\end{center}
At the end of the high school, a student is aware (or, at least, is supposed to be aware) of a good dozen of laws:
\begin{itemize}
\item[-] the commutative and associative laws of addition,
\item[-] the commutative and associative laws of multiplication,
\item[-] the distributive law of multiplication over addition,
\item[-] the difference of two squares identity,
\item[-] the Pythagorean trigonometric identity,
\end{itemize}
etc, etc. Moreover, the student may feel (though probably cannot explain) the difference between `primary' identities such as
\begin{equation}
ab=ba \label{commutativity}
\end{equation}
and
\begin{equation}
(ab)c=a(bc) \label{associativity}
\end{equation}
and `secondary' ones such as, for instance,
\begin{equation}
(ab)^2=a^2b^2. \label{example}
\end{equation}
`Primary' laws such as \eqref{commutativity} or \eqref{associativity} are \emph{intrinsic} properties of objects (say, numbers) we multiply and of the way the multiplication is defined, whereas `secondary' identities can be \emph{formally inferred} from `primary' ones, without knowing which objects are multiplied and how the multiplication is defined. Here is a simple example of such a formal inference:
\begin{align*}
(ab)^2&=(ab)(ab)&&\text{by the definition of squaring}\\
      &=a(ba)b  &&\text{by the law \eqref{associativity}}\\
      &=a(ab)b  &&\text{by the law \eqref{commutativity}}\\
      &=(aa)(bb)&&\text{by the law \eqref{associativity}}\\
      &=a^2b^2  &&\text{by the definition of squaring}
\end{align*}
Thus, \eqref{example} is a formal corollary of \eqref{associativity} and \eqref{commutativity} and holds whenever and wherever the two laws hold. That is why, when extending the set of natural numbers (positive integers) to the set of integers, and then to the set of rationals, and then to the set of reals, and then to the set of complex numbers, we have to care of preserving \eqref{associativity} and \eqref{commutativity} in the sense that it has to be proved that the laws persist under each of these extensions. In contrast, there is no need to bother with `secondary' identities like \eqref{example} as their formal proofs carry over.

A big part of algebra in fact deals with inferring some useful `secondary identities' from some `primary' laws. Identities to be inferred may be quite complicated, and the inference itself may be highly non-trivial. Think,
for instance, of the product rule for determinant:
\begin{equation}
\det AB=\det A\det B.\label{product rule}
\end{equation}
It looks quite innocent due to convenient notation, but the reader certainly realizes that in fact \eqref{product rule} constitutes a powerful identity whose explicit form is rather bulky already for matrices of a modest size. Indeed, if, say, $A=\begin{pmatrix} a&b\\ c& d\end{pmatrix}$ and $B=\begin{pmatrix} x& y\\ z& t\end{pmatrix}$, then \eqref{product rule} amounts to the identity
\[
(ax+bz)(cy+dt)-(ay+bd)(cx+dz)=(ad-bc)(xt-yz),
\]
and even imagining the explicit form of \eqref{product rule} for $3\times3$-matrices is painful, to say nothing of actually writing it down.

However, one can observe that usually only a few `primary' laws are invoked in the course of the inference even if it is cumbersome. For instance, to deduce the identity \eqref{product rule}, one needs only the very basic laws, namely, the commutative and associative laws of addition and multiplication, the distributive law of multiplication over addition, and the existence of subtraction (that is expressed by the law $a=(a-b)+b$). This observation leads to the idea of composing a \emph{complete} list of~`primary' laws that would allow one to infer \emph{every} possible identity. Such a list is called an \emph{identity basis}. It should be mentioned that even though this usage of the word `basis' is quite common, its meaning here differs from the standard meaning of this term in linear algebra since no independence assumptions are made: the only requirement for a collection of identities $\Sigma$ to form an identity basis is that every identity should be deducible from $\Sigma$!

Of course, in order to speak about an identity basis, one has to specify which identities are under consideration. In this paper, we deal with the simplest non-trivial case of a single \emph{binary} operation. The attribute `binary' means that the operation involves two operands, like addition and multiplication of numbers do. Thus, a binary operation on a non-empty set $S$ is merely a map $S\times S\to S$.

The principal question on which studies of identity bases are focused is known as the \emph{Finite Basis Problem} (FBP, for short). For the purpose of this paper, the FBP may be formulated as follows:
\begin{Problem}[The Finite Basis Problem]
Given a structure $(S,\cdot)$ where $\cdot$ is a binary operation on a set $S$, determine whether or not the identities of $(S,\cdot)$ have a finite basis.
\end{Problem}

The FBP is natural by itself, but it has also revealed a number of interesting and unexpected relations to many issues of theoretical and practical importance ranging from feasible algorithms for membership in certain classes of formal languages to classical number-theoretic conjectures such as the Twin Prime, Goldbach, existence of odd perfect numbers and the infinitude of even perfect numbers---it has been shown by Peter Perkins~\cite{Perkins:1989}
that each of these conjectures is equivalent to the FBP for a structure of the form $(S,\cdot)$.

We say that a structure $(S,\cdot)$ is \emph{\fb} if the answer to the FBP for $(S,\cdot)$ is positive, that is, if the identities of $(S,\cdot)$ have a finite basis. Otherwise,  $(S,\cdot)$ is called \emph{\nfb}.

Even a \emph{finite} structure of the form $(S,\cdot)$ can be \nfb. The smallest example is a 3-element structure known as Murski\v{\i}'s groupoid~\cite{Murskii:1965}. However, arguably, the most striking example (known as the 6-element \emph{Brandt monoid} $B_2^1$) is formed by the following six $2\times2$-matrices:
\begin{equation}\label{eq:b21}
\begin{pmatrix}
1 & 0\\ 0 & 1
\end{pmatrix},\
\begin{pmatrix}
1 & 0\\ 0 & 0
\end{pmatrix},\
\begin{pmatrix}
0 & 1\\ 0 & 0
\end{pmatrix},\
\begin{pmatrix}
0 & 0\\ 1 & 0
\end{pmatrix},\
\begin{pmatrix}
0 & 0\\ 0 & 1
\end{pmatrix},\
\begin{pmatrix}
0 & 0\\ 0 & 0
\end{pmatrix},
\end{equation}
the operation being the usual matrix multiplication. This example is due to Perkins~\cite{Perkins:1969}. Thus, here we see a very transparent, very natural, and very finite structure whose identities cannot be axiomatized by finite means.

In the 1960s, Alfred Tarski~\cite{Ta68} suggested to study the FBP for finite structures as a \emph{decision problem}. Indeed, since any \fs\ is an object that can be given in a constructive way, one can ask for an algorithm which when presented with an effective description of the structure, would determine whether or not it is \fb.

\begin{Problem}[Tarski's Finite Basis Problem]
Is there an algorithm that when given an effective description of a~\fs\ decides whether it is \fb\ or not?
\end{Problem}

This fundamental question was answered in the negative by Ralph McKenzie~\cite{Mc96} who showed that no algorithm can decide the FBP for finite structures of the form $(S,\cdot)$. Thus, no mechanical procedure for answering to the FBP exists in general, and one should be more clever than a computer to get an answer!

\section{The Finite Basis Problem for semigroups and our contribution}
\label{sec:intro2}

In this paper, we deal with the FBP for \emph{semigroups}, that is, structures of the form $(S,\cdot)$ satisfying the associative law \eqref{associativity}. Perkins's example cited in Section~\ref{sec:intro1} revealed that finite semigroups can be \nfb. Moreover, it turns out that \sgps\ are the only `classical' algebras for which finite \nfb\ objects can exist: finite groups \cite{OaPo64}, finite associative and Lie rings \cite{Kr73,Lv73,BaOl75}, finite lattices \cite{McK70} are all \fb. Therefore studying \fss\ from the viewpoint of the FBP has become a hot area in which many neat results have been achieved and several powerful methods have been developed, see the survey \cite{Volkov-01} for an overview. The present paper develops a novel approach to the Finite Basis Problem for finite semigroups initiated in~\cite{Sapir-Volkov-22} and solves one of the problems posed in~\cite{Volkov-01}. As an application, we answer a question left open in~\cite{VoGo03}.

In order to describe our contribution in precise way, we proceed with introducing a few notions and setting up our notation. The basic concepts we need come from equational logic; see, e.g., \cite[Chapter~II]{BuSa81}. For the reader's convenience, we present them here in a form adapted to the use in this paper, that is, specialized to semigroups. When doing so, we closely follow~\cite[Section 1]{Sapir-Volkov-22}.

A (\emph{semigroup}) \emph{word} is a finite sequence of symbols, called \emph{variables}. Sometimes we employ the \emph{empty word}, that is, the empty sequence. Whenever words under consideration are allowed to be empty, we always say it explicitly.

We denote words by lowercase boldface letters. If $\mathbf{w}=x_1\cdots x_k$, where $x_1,\dots,x_k$ are variables, then the set $\{x_1,\dots,x_k\}$ is denoted by $\alf(\mathbf{w})$. If $\mathbf{w}$ is empty, then $\alf(\mathbf{w})=\varnothing$.

Words are multiplied by concatenation, that is, for any words $\mathbf{w}'$ and $\mathbf{w}$, the sequence $\mathbf{ww}'$ is obtained by appending the sequence $\mathbf{w}'$ to the sequence $\mathbf{w}$.

Any map $\varphi\colon\alf(\mathbf{w})\to S$, where $S$ is a semigroup, is called a \emph{substitution}. The \emph{value} $\varphi(\mathbf{w})$ of $\mathbf{w}$ under $\varphi$ is the element of $S$ that results from substituting $\varphi(x)$ for each variable $x\in\alf(\mathbf{w})$ and computing the product in $S$.

A (\emph{semigroup}) \emph{identity} is a pair of words written as a formal equality. From now on, we use the sign $\approx$ when writing identities (so that a pair $(\mathbf{w},\mathbf{w}')$, say, is written as $\mathbf{w}\approx\mathbf{w}'$), saving the standard sign $=$ for `genuine' equalities. A semigroup $S$ \emph{satisfies} $\mathbf{w}\approx \mathbf{w}'$ (or $\mathbf{w}\approx \mathbf{w}'$ \emph{holds} in $S$) if $\varphi(\mathbf{w})=\varphi(\mathbf{w}')$ for every substitution $\varphi\colon\alf(\mathbf{ww}')\to S$, that is, substitutions of~elements from $S$ for the variables occurring in $\mathbf{w}$ or $\mathbf{w}'$ yield equal values to these words.

In Section~\ref{sec:intro1} we mentioned formal inference of identities. For semigroup identities, the inference rules are very transparent as they amount to substituting a word for each occurrence of a variable in an identity, multiplying an identity through on the right or the left by a word, and using symmetry and transitivity of equality. Birkhoff's completeness theorem of equational logic \cite[Theorem 14.17]{BuSa81} gives a clear semantic meaning
to formal inference: an identity $\mathbf{w}\approx\mathbf{w}'$ can be inferred from a set $\Sigma$ of identities if and only if every semigroup satisfying all identities in $\Sigma$ satisfies the identity $\mathbf{w}\approx\mathbf{w}'$ as well. In this situation, we say that an identity $\mathbf{w}\approx\mathbf{w}'$ \emph{follows} from $\Sigma$ or that $\Sigma$ \emph{implies} $\mathbf{w}\approx\mathbf{w}'$.

As defined in Section~\ref{sec:intro1}, a semigroup ${S}$ is \emph{\fb} if it possesses a finite \ib\ and \emph{\nfb} otherwise. We mentioned at the start of this section that the FBP restricted to \fss\ becomes nontrivial; moreover, its algorithmic version, that is, Tarski's Finite Basis Problem restricted to semigroups, remains open so far.

The class of all semigroups satisfying all identities from a given set $\Sigma$ is called the \emph{variety defined by $\Sigma$}. A variety is \emph{\fb} if it can be defined by a finite set of identities; otherwise it is \emph{\nfb}. Given a semigroup ${S}$, the variety defined by  the set of all identities $S$ satisfies is denoted by $\var S$ and called the \emph{variety generated by $S$}. A variety is called \emph{\fg} if it can be generated by a finite semigroup.

A variety is \emph{locally finite} if each of its finitely generated members is finite. A  finite semigroup is called \emph{inherently \nfb} if it is not contained in any finitely based locally finite variety. The very first example of an inherently \nfb\ semigroup was discovered by Mark Sapir~\cite{Sa87a} who proved that the 6-element Brandt monoid $B_2^1$ is inherently \nfb. In~\cite{Sa87b} he gave a structural characterization of all inherently \nfb\ semigroups, which, in particular, led to an algorithm to recognize whether or not a given finite semigroup is inherently \nfb. (This sharply contrasts McKenzie's result \cite{Mc96} that no such algorithm exists for general finite structures.)

It is easy to see that the satisfaction of an identity is inherited by forming direct products and taking \emph{divisors} (that is, homomorphic images of subsemigroups) of semigroups so that each variety is closed under these two operators. In fact, this closure property characterizes varieties (the HSP-theorem; see \cite[Theorem 11.9]{BuSa81}). An easy byproduct of the proof of the HSP-theorem (see \cite[Theorem 10.16]{BuSa81}) is that every \fg\ variety is locally finite. By the definition, a semigroup and the variety it generates are simultaneously finitely or \nfb. Hence, to prove that a given finite semigroup $S$ is \nfb, it suffices to exhibit an inherently \nfb\ semigroup in the variety $\var S$. This argument, combined with Sapir's characterization of all inherently \nfb\ semigroups, has become one of the most powerful and easy-to-use methods in studying the FBP for finite semigroups.

Now let us quote from the survey~\cite{Volkov-01}.
\begin{quote}
If one focuses on the \fbp\ for \fss\ (like we do in this survey), then the notion of an \infb\ \sgp\ appears to be rather abundant. Why should we care about locally finite varieties which are not finitely generated when we are only interested in \fg\ ones? This question leads us to introduce the following notion: call a finite semigroup $S$ \emph{\snfb} if $S$ cannot be a member of any \fb\ \fg\ variety. Clearly, every \infb\ finite semigroup is \snfb, and the question if the converse is true is another intriguing open problem:
\begin{Problem44}
\label{snfb vs infb}
Is there a \snfb\ finite semigroup which is not \infb?
\end{Problem44}
\end{quote}

In this paper, we answer the question asked in~\cite[Problem 4.4]{Volkov-01} in the affirmative. Our example is the 42-element semigroup $IC_4$ from~\cite{Sapir-Volkov-22} where it was shown to have a weaker property. We recall  the definition of the semigroup $IC_4$ and one of its features in Section~\ref{sec:plelim} and then prove our main result in Section~\ref{sec:main}. Section~\ref{sec:application} presents an application.

\section{Preliminaries}
\label{sec:plelim}

Following~\cite[Section 2]{Sapir-Volkov-22}, we introduce the semigroup $IC_4$ as a member of a family of transformation monoids.

Let $[m]$ stand for the set of the first $m$ positive integers ordered in the usual way: $1<2<\dots<m$. By a \emph{\pat} of $[m]$ we mean an arbitrary map $\alpha$ from a subset of $[m]$ (called the \emph{domain} of $\alpha$ and denoted $\dom\alpha$) to $[m]$. We write \pts\ on the right of their arguments. A \pat\ $\alpha$ is \emph{order preserving} if $i\le j$ implies $i\alpha\le j\alpha$ for all $i,j\in\dom\alpha$, and \emph{extensive} if $i\le i\alpha$ for every $i\in\dom\alpha$.  Clearly, if two transformations have either of the properties of being injective, order preserving, or extensive, then so does their product, and the identity transformation enjoys all three properties. Hence, the set of all partial injections of $[m]$ that are extensive and order preserving forms a monoid\footnote{Recall that a \emph{monoid} is a semigroup with an identity element.} that we denote by $IC_m$ and call the $m$th $i$-\emph{Catalan monoid}. Both `I' in the notation and `$i$' in the name mean `injective'; the `Catalan' part of the name again refers to the cardinality of the monoid: $|IC_m|$ is the $(m+1)$-th Catalan number. In particular, $|IC_4|$ is the 5th Catalan number 42 aka the Answer to the Ultimate Question of Life, The Universe, and Everything; see \cite{Adams:1979}.

The key property of the monoid $IC_4$ for this paper involves two combinatorial notions, which we now recall.

Let $\mathbf{u}$ be a word and $x$ a variable in $\alf(\mathbf{u})$. If $x$ occurs exactly once in $\mathbf{u}$, then the variable is called \emph{linear} in $\mathbf{u}$. If $x$ occurs more than once in $\mathbf{u}$, then we say that the variable is \emph{repeated} in $\mathbf{u}$. A word $\mathbf{u}$ is called \emph{sparse} if every two occurrences of a repeated variable in $\mathbf{u}$ sandwich some linear variable.

Given a semigroup $S$, a word $\mathbf{u}$ is called an \emph{isoterm for} $S$ if the only word $\mathbf{v}$ such that $S$ satisfies the identity $\mathbf{u} \approx  \mathbf{v}$ is the word $\mathbf{u}$ itself.

\begin{lemma}[\!{\cite[Lemma~3.4]{Sapir-Volkov-22}}]
\label{lem:sparseC5}
Every sparse word is an isoterm for the monoid $IC_4$.
\end{lemma}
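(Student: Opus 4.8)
The plan is to argue by the \emph{substitution method}: assuming $IC_4$ satisfies $\mathbf u\bumpeq\mathbf v$ with $\mathbf u$ sparse, I would feed into both sides a carefully chosen family of substitutions $\varphi\colon\alf(\mathbf{uv})\to IC_4$ and read off, from the equalities $\varphi(\mathbf u)=\varphi(\mathbf v)$, progressively finer information about $\mathbf v$ until I can conclude $\mathbf v=\mathbf u$ letter by letter. The whole toolkit consists of three kinds of elements of $IC_4$, all easily checked to be extensive, order preserving and injective: the \emph{partial successor} $\sigma$ with $\dom\sigma=\{1,2,3\}$ and $i\sigma=i+1$, whose powers satisfy $1\sigma^{k}=1+k$ for $k\le 3$ and $\sigma^{4}=\varnothing$; the \emph{point-tests} $\pi_i$ (the identity restricted to $\{i\}$), which let a token pass only when it sits at level $i$ and otherwise delete it; and the identity map. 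The guiding idea is that a token placed at $1$ and pushed up the $4$-chain by occurrences of a designated variable records, through its final position, how many times that variable was read---but only up to $3$ steps, since the chain has height $4$. Sparseness is exactly what keeps every count I need to take below this ceiling.

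First I would fix the content. Substituting $x\mapsto\sigma$ and every other variable to the identity turns $\varphi(\mathbf u)$ into $\sigma^{k}$, where $k$ is the number of occurrences of $x$ in $\mathbf u$; from $\sigma^{k}=\varphi(\mathbf v)$ together with the fact that $1,\sigma,\sigma^2,\sigma^3,\sigma^4=\varnothing$ are pairwise distinct, I get that $x$ occurs in $\mathbf v$ if and only if it occurs in $\mathbf u$, so $\alf(\mathbf u)=\alf(\mathbf v)$. Replacing $\sigma$ by the still simpler map $1\mapsto 2$ with domain $\{1\}$, whose square is already $\varnothing$, sharpens this for linear variables: if $x$ is linear in $\mathbf u$ then it must occur \emph{exactly once} in $\mathbf v$. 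Thus $\mathbf u$ and $\mathbf v$ share the same alphabet and the same linear variables, each linear variable being genuinely linear on both sides.

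Next I would pin down the relative order of occurrences. In the token picture, a substitution that sends the variables of some chosen initial segment to $\sigma$ and uses a point-test $\pi_i$ to interrogate the token after a particular later letter detects, for instance, in which order the first occurrences of two variables appear, and whether a given linear variable precedes or follows a given occurrence of another variable. Carrying this out for all relevant pairs shows that the sequence of \emph{linear} variables of $\mathbf v$ coincides, in order, with that of $\mathbf u$; these linear letters then serve as rigid anchors dividing both words into corresponding blocks. The role of sparseness enters here: because every two consecutive occurrences of a repeated variable of $\mathbf u$ are separated by a linear anchor, no block between consecutive anchors contains a repeated letter twice, so each repeated occurrence is localized to a unique block and the number of increments any test must span stays within the $3$-step reach of $\sigma$.

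The remaining and most delicate step is to reconstruct the placement of the repeated variables exactly, and here I expect the main obstacle to lie. Since I may assume sparseness only for $\mathbf u$ and not for $\mathbf v$, I cannot argue symmetrically; instead I would proceed by induction on the length of $\mathbf u$, peeling off a linear anchor $z$ that writes $\mathbf u=\mathbf u_1 z\mathbf u_2$ and, by the order information already obtained, $\mathbf v=\mathbf v_1 z\mathbf v_2$, and then showing that the gating substitutions reduce $\mathbf u_1\bumpeq\mathbf v_1$ and $\mathbf u_2\bumpeq\mathbf v_2$ to the same situation in a shorter sparse word. The difficulty is that the two occurrences of a repeated variable may straddle the anchor $z$, so the split is not clean and the token tests must simultaneously control what happens on both sides of $z$ while never demanding more than height $4$; verifying that such tests exist for every configuration permitted by sparseness, and that they collectively pin each letter of $\mathbf v$ to its counterpart in $\mathbf u$, is the crux of the argument.
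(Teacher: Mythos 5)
Your proposal is not a proof but a plan, and the plan stops exactly where the lemma's real content begins. (Note that the paper itself offers no proof to compare against: it imports the statement from \cite[Lemma~3.4]{Sapir-Volkov-22}, so your argument has to stand on its own.) The opening moves do stand: the substitution $x\mapsto\sigma$, all else $\mapsto 1$, correctly yields $\alf(\mathbf u)=\alf(\mathbf v)$; substituting the map $1\mapsto 2$ shows every linear variable of $\mathbf u$ occurs exactly once in $\mathbf v$; and tests like $z_1\mapsto[1\mapsto2]$, $z_2\mapsto[2\mapsto3]$ force the linear variables to appear in the same order on both sides. But this only gives the skeleton $\mathbf v=\mathbf C_0z_1\mathbf C_1z_2\cdots z_m\mathbf C_m$ matching $\mathbf u=\mathbf B_0z_1\mathbf B_1\cdots z_m\mathbf B_m$, where the $z_i$ are the linear variables. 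The lemma amounts to $\mathbf B_i=\mathbf C_i$ for all $i$: one must pin down, for every repeated variable, exactly which blocks its occurrences lie in and in what order distinct repeated variables sit inside each block. Sparseness does not make this trivial: the sparse word $stzts$ has blocks $st$ and $ts$ whose internal orders must both be recovered, and in the sparse word $xz_1xz_2xz_3xz_4x$ the variable $x$ occurs five times, so your counting substitution---which cannot distinguish $\sigma^{k}$ for $k\ge4$, all equal to the empty map---does not even determine occurrence multiplicities. You explicitly defer all of this (``verifying that such tests exist \dots\ is the crux of the argument''), so the decisive step is missing, not merely compressed.

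Moreover, the inductive reduction you propose for that step is invalid as stated. From an identity $\mathbf u_1z\,\mathbf u_2\bumpeq\mathbf v_1z\,\mathbf v_2$ holding in $IC_4$ one cannot conclude that $IC_4$ satisfies $\mathbf u_1\bumpeq\mathbf v_1$ and $\mathbf u_2\bumpeq\mathbf v_2$: evaluation does not split at the anchor when a repeated variable has occurrences on both sides of $z$, and that is precisely a configuration sparseness permits (again $stzts$). So the induction has no justified inductive step, and the ``token tests controlling both sides of $z$ simultaneously'' that would repair it are exactly the substitutions you never construct or verify against the height-$4$ ceiling of the chain. Until those are exhibited, the proposal does not establish the lemma.
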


We also need some properties of a class of finite semigroups defined in terms of the Green relation $\mathrsfs D$. Recall that for a semigroup $S$, the notation $S^1$ stands for the least monoid containing $S$, that is, $S^1:=S$ if $S$ has an identity element and  $S^1:=S\cup\{1\}$ if $S$ has no identity element; in the latter case the multiplication in $S$ is extended to $S^1$ in a unique way such that the fresh symbol $1$ becomes the identity element in $S^1$. James Alexander Green (cf. \cite{Gre51}) introduced five equivalence relations on every semigroup $S$ which are collectively referred to as \emph{Green's relations}. Of those five relations, we need the following four:
\begin{itemize}
\item[] $x\,\mathrsfs{R}\,y \Leftrightarrow xS^1 = yS^1$, i.e., $x$ and $y$ generate the same right ideal;
\item[] $x\,\mathrsfs{L}\,y \Leftrightarrow S^1x = S^1y$, i.e., $x$ and $y$ generate the same left ideal;
\item[] $x\,\mathrsfs{J}\,y \Leftrightarrow S^1xS^1 = S^1yS^1$, i.e., $x$ and $y$ generate the same ideal;
\item[] $x\,\mathrsfs{D}\,y \Leftrightarrow (\exists z\in S)\ x\,\mathrsfs{R}\,z \land z\,\mathrsfs{L}\,y$, i.e., $\mathrsfs{D} = \mathrsfs{RL}$.
\end{itemize}
In addition, we write $x\le_{\mathrsfs{J}}y$ if $x\in S^1yS^1$.

An element $e$ of a semigroup $S$ is called an \emph{idempotent} if $e^2=e$. We let $\mathbf{DS}$ stand for the class of all finite semigroups in which every $\mathrsfs D$-class containing an idempotent is a subsemigroup. It is well-known (and easy to verify) that $\mathbf{DS}$ is a \emph{pseudovariety}, that is, a class of finite semigroups closed under forming finite direct products and taking divisors.

The following proposition summarizes the features of semigroups in $\mathbf{DS}$ that we employ. They all can be found (or readily follow from some results) in either Jorge Almeida's monograph \cite{Almeida-95}, where the pseudovariety $\mathbf{DS}$ is comprehensively studied in Chapter 8, or Lev Shevrin's memoir \cite{Shevrin:94}, where Section 3 treats a semigroup class whose finite members exactly constitute $\mathbf{DS}$.

\begin{proposition}
\label{P:DS} Let $S$ be a semigroup in $\mathbf{DS}$.\\
\emph{(a)} Every $\mathrsfs D$-class of $S$ containing an idempotent is a union of its subgroups.\\
\emph{(b)} If $\mathbf u$ and $\mathbf v$ are words with $\alf(\mathbf u)=\alf(\mathbf v)$, then for any substitution $\varphi\colon\alf(\mathbf{u})\to S$ such that $\varphi(\mathbf u)$ is an idempotent, $\varphi(\mathbf u)\le_{\mathrsfs{J}}\varphi(\mathbf v)$.\\
\emph{(c)} If $e\le_{\mathrsfs{J}}a$ and $e\le_{\mathrsfs{J}}b$ for some idempotent $e\in S$ and some $a,b\in S$, then $aeb\,\mathrsfs{D}\,e$.
\end{proposition}

\begin{proof}
Claim (a) is contained in \cite[Theorem 3]{Shevrin:94}; see conditions (4a) or (4c) there.

For (b), we use condition (1b) in \cite[Theorem 3]{Shevrin:94}. It provides a homomorphism $\psi$ from $S$ onto a commutative semigroup of idempotents such that for every idempotent $e$ and every element $a$ in $S$, the equality $\psi(e)=\psi(a)$ implies $e\le_{\mathrsfs{J}}a$. (In terminology of~\cite{Shevrin:94}, this fact is expressed by saying that $S$ is a semilattice of Archimedean semigroups.) The condition $\alf(\mathbf u)=\alf(\mathbf v)$ readily implies $\psi(\varphi(\mathbf u))=\psi(\varphi(\mathbf v))=\prod\limits_{x\in\alf(\mathbf u)}\psi(\varphi(x))$ due to commutativity and idempotency of the semigroup $\psi(S)$. Hence,  $\varphi(\mathbf u)\le_{\mathrsfs{J}}\varphi(\mathbf v)$.

Claim (c) follows from~\cite[Lemma~8.1.4]{Almeida-95} combined with the observation that $\mathrsfs D=\mathrsfs J$ on every finite semigroup~\cite[Theorem 3]{Gre51}.
\end{proof}

The proof of the next lemma closely follows the pattern of the proof of \cite[Lemma~8.1.9]{Almeida-95} but is included for the sake of completeness.
\begin{lemma}\label{L:S-in-DS}
Let $S\in\mathbf{DS}$ and $k:=|S|!$. Then for every word $\mathbf u$ that can be decomposed as $\mathbf u=\mathbf u_0\mathbf u_1\cdots \mathbf u_n$ with $n>|S|$ and $\alf(\mathbf u_0)=\alf(\mathbf u_1)=\cdots=\alf(\mathbf u_n)$, the identity $\mathbf u\approx\mathbf u^{k+1}$ holds in $S$.
\end{lemma}

\begin{proof}
Let $\mathbf w_i:=\mathbf u_0\mathbf u_1\cdots\mathbf u_i$. Take an arbitrary substitution $\varphi\colon\alf(\mathbf{u})\to S$. For brevity, let $u_i:=\varphi(\mathbf u_i)$ and $w_i:=\varphi(\mathbf w_i)$. The $n$ elements $w_0,w_1,\dots,w_{n-1}$ may not be all distinct, and so there exist indices $p,q$ with $0\le p<q<n$ such that $w_p=w_q$. Hence
\[
w_q=w_pu_{p+1}u_{p+2}\cdots u_q=w_qu_{p+1}u_{p+2}\cdots u_q,
\]
from which we deduce the equality
\begin{equation}\label{eq:repetition}
w_q=w_q(u_{p+1}u_{p+2}\cdots u_q)^k.
\end{equation}
It is known (and easy to verify) that the $k$th power of any element of $S$ is an idempotent. Since $\alf(\mathbf w_q)=\alf(\mathbf u_i)=\alf(\mathbf u_{q+1}\mathbf u_{q+2}\cdots\mathbf u_n)$ for $i=0,1,\dots,n$,  Proposition~\ref{P:DS}(b) implies that $(u_{p+1}u_{p+2}\cdots u_q)^k\le_{\mathrsfs{J}}w_q$ and $(u_{p+1}u_{p+2}\cdots u_q)^k\le_{\mathrsfs{J}}u_{q+1}u_{q+2}\cdots u_n$. Then by Proposition~\ref{P:DS}(c) the element
\[
w_n=w_qu_{q+1}u_{q+2}\cdots u_n\stackrel{\eqref{eq:repetition}}{=}w_q(u_{p+1}u_{p+2}\cdots u_q)^ku_{q+1}u_{q+2}\cdots u_n
\]
and the idempotent $(u_{p+1}u_{p+2}\cdots u_q)^k$ lie in the same $\mathrsfs{D}$-class. By Proposition~\ref{P:DS}(a) the $\mathrsfs D$-class of the element $w_n$ is a union of its subgroups. Thus, $w_n$ belongs to a subgroup of $S$. Then the idempotent $w_n^k$ is the identity element of this subgroup, and $w_n^{k+1}=w_n$. Consequently, we have
\[
\varphi(\mathbf u)=w_n=w_n^{k+1}=\varphi(\mathbf u^{k+1}).
\]
Since the substitution $\varphi\colon\alf(\mathbf{u})\to S$ is arbitrary, $S$ satisfies the identity $\mathbf u\approx \mathbf u^{k+1}$.
\end{proof}

By $B_2$ we denote the subsemigroup of the Brandt monoid $B_2^1$ consisting of the five non-identity matrices in~\eqref{eq:b21}. The following characterization of finite semigroups beyond $\mathbf{DS}$ occurs as Exercise~8.1.6 in \cite{Almeida-95}; the solution to this exercise follows from \cite[Theorem 3]{Shevrin:94}.

\begin{lemma}
\label{lem:ds}
A finite semigroup $S$ does not belong to the pseudovariety\/ $\mathbf{DS}$ if and only if $S\times S$ has the semigroup $B_2$ as a divisor.
\end{lemma}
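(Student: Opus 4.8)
The plan is to prove the two implications separately: the forward direction follows from abstract closure properties, while the reverse direction requires the Rees structure theory of completely $0$-simple semigroups.

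For the easy direction, suppose $B_2$ is a divisor of $S\times S$. First observe that $B_2\notin\mathbf{DS}$: in $B_2$ the $\mathrsfs{D}$-class consisting of the four nonzero rank-one matrices $e_{11},e_{12},e_{21},e_{22}$ contains the idempotents $e_{11},e_{22}$, yet $e_{12}^2=0$ lies outside it, so this $\mathrsfs{D}$-class is not a subsemigroup. Since $\mathbf{DS}$ is a pseudovariety, it is closed under direct products and divisors; hence if $S$ lay in $\mathbf{DS}$, then so would $S\times S$ and therefore its divisor $B_2$, a contradiction. Thus $S\notin\mathbf{DS}$, and this is exactly the place where the closure under products is used.

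For the hard direction, assume $S\notin\mathbf{DS}$, so there is a $\mathrsfs{D}$-class $D$ containing an idempotent that fails to be a subsemigroup; fix $a,b\in D$ with $ab\notin D$. I would pass to the principal factor of $D$, which, being the Rees quotient of the principal ideal of $D$ by the elements strictly below it, is a divisor of $S$ and, as $D$ is regular, a completely $0$-simple semigroup; by the Rees theorem it is a Rees matrix semigroup $\mathcal{M}^0(G;I,\Lambda;P)$. Under this coordinatization the Clifford--Miller location result translates ``$D$ is not a subsemigroup'' into ``the sandwich matrix $P$ has a zero entry'', while regularity forces every row and every column of $P$ to carry a nonzero entry. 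Picking a zero entry together with a nonzero entry in its row and a nonzero entry in its column yields a $2\times2$ submatrix $P'$; restricting to the corresponding $\mathrsfs{H}$-classes and adjoining $0$ gives a completely $0$-simple divisor $\mathcal{M}^0(G;\{1,2\},\{1,2\};P')$, and the homomorphism $(i,g,\lambda)\mapsto(i,\lambda)$, $0\mapsto 0$ collapses $G$ to the trivial group. It thus suffices to analyze the two combinatorial patterns of $P'$. If $P'$ has two zeros, the resulting combinatorial semigroup is exactly $B_2$ and we are done with $S$ itself. If $P'$ has a single zero, we obtain instead the other five-element combinatorial completely $0$-simple semigroup $A_2=\mathcal{M}^0(\{1\};\{1,2\},\{1,2\};\left(\begin{smallmatrix}0&1\\1&1\end{smallmatrix}\right))$, which has three idempotents and is congruence-free, so $B_2$ is not a divisor of $A_2$. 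This is precisely why a single factor no longer suffices and the square $S\times S$ becomes indispensable.

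The main obstacle is therefore the final step: realizing $B_2$ as a divisor of $A_2\times A_2$ (equivalently, of $D^0\times D^0$). I would take $p=(c,e)$ and $q=(e,c)$, where $c$ is the unique $\mathrsfs{H}$-class of $A_2$ squaring to $0$ and $e$ is an idempotent, and check that $pqp=p$, $qpq=q$, and that $pq$ and $qp$ are distinct idempotents, whereas $p^2,q^2$ and the further products they spawn are degenerate. The subsemigroup $\langle p,q\rangle$ is strictly larger than a copy of $B_2$, so the delicate part is to pin down a congruence collapsing all the surplus elements to a single zero while keeping $p,q,pq,qp$ apart, the quotient then being $B_2$. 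The existence of such a congruence is what the cited solution to Exercise~8.1.6 in~\cite{Almeida-95}, resting on \cite[Theorem~3]{Shevrin:94}, supplies.
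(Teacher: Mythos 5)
The first thing to note is that the paper contains no proof of this lemma at all: it is stated with a pointer to Exercise~8.1.6 of \cite{Almeida-95} and the remark that the solution follows from \cite[Theorem~3]{Shevrin:94}. Your proposal therefore cannot match ``the paper's proof''; instead it reconstructs the standard Rees-theoretic solution of that exercise, and in outline it is sound: the easy direction by closure of $\mathbf{DS}$ under finite products and divisors; the hard direction via the principal factor of a regular $\mathrsfs{D}$-class $D$ that is not a subsemigroup (this factor must indeed be completely $0$-simple rather than completely simple, for otherwise $D$ would be the kernel and hence a subsemigroup), the translation of non-closure of $D$ into a zero entry of the sandwich matrix, the $2\times2$ submatrix dichotomy yielding $B_2$ (done, since $B_2$ then divides $S$ itself) or $A_2$, and the passage to $A_2\times A_2$ in the latter case, which you correctly identify as the point where the square becomes indispensable. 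Two points need tightening to make this a complete argument. First, in the pair $p=(c,e)$, $q=(e,c)$ the idempotent $e$ cannot be arbitrary: of the three nonzero idempotents of $A_2$, the construction works only for the one that is neither $\mathrsfs{R}$- nor $\mathrsfs{L}$-related to $c$ (in coordinates where $c=(2,2)$ and $p_{22}$ is the zero entry, this is $e=(1,1)$). If $e$ is $\mathrsfs{R}$- or $\mathrsfs{L}$-related to $c$, then one of $ce$, $ec$ equals $0$, so $pq$ and $qp$ acquire a zero coordinate and the checks $pqp=p$, $qpq=q$ fail. Second, the congruence you call delicate and outsource to the citation is in fact immediate: the elements of $\langle p,q\rangle$ having at least one zero coordinate form an ideal $I$; since any product of $p$'s and $q$'s containing $p^2$ or $q^2$ as a factor lies in $I$, while the alternating products all equal $p$, $q$, $pq$ or $qp$, one gets $\langle p,q\rangle\setminus I=\{p,q,pq,qp\}$, and the Rees quotient $\langle p,q\rangle/I$ is a copy of $B_2$ via $p\mapsto e_{12}$, $q\mapsto e_{21}$, $pq\mapsto e_{11}$, $qp\mapsto e_{22}$. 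With these two repairs your argument is correct and fully self-contained, which is more than the paper itself offers for this statement.
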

For each idempotent $e$ of a semigroup $S$, the set $eSe:=\{ese\mid s\in S\}$ is a subsemigroup in which $e$ serves as an identity element. We call $eSe$ the \emph{local submonoid of $S$ at} $e$. By $\mathbf{LDS}$ we denote the class of all finite semigroups all of whose local submonoids lie in $\mathbf{DS}$. The class $\mathbf{LDS}$ also forms a pseudovariety; see \cite[Section 5.2]{Almeida-95}. We need the following corollary of Lemma~\ref{lem:ds}.

\begin{corollary}
\label{cor:lds}
A finite semigroup $S$ does not belong to the pseudovariety\/ $\mathbf{LDS}$ if and only if $S\times S$ has the monoid $B^1_2$ as a divisor.
\end{corollary}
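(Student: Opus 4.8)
The plan is to deduce the corollary from Lemma~\ref{lem:ds} by isolating a single auxiliary statement about monoids: \emph{for every finite monoid $M$, the semigroup $B_2$ is a divisor of $M$ if and only if the monoid $B_2^1$ is a divisor of $M$.} Granting this, both implications of the corollary follow quickly, and I expect its proof to be the only genuine obstacle.

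I would dispose of the implication ``$S\times S$ has $B_2^1$ as a divisor $\Rightarrow S\notin\mathbf{LDS}$'' by contraposition, using only that $\mathbf{LDS}$ is a pseudovariety. First one observes that $B_2^1\notin\mathbf{LDS}$: the local submonoid of $B_2^1$ at its identity is $B_2^1$ itself, and the $\mathrsfs D$-class of $B_2^1$ formed by the four matrix units contains idempotents but is not a subsemigroup, since the product of the two diagonal matrix units is the zero matrix; hence $B_2^1\notin\mathbf{DS}$. Now if $S\in\mathbf{LDS}$, then $S\times S\in\mathbf{LDS}$ because $\mathbf{LDS}$ is closed under finite direct products, and since $\mathbf{LDS}$ is closed under divisors, $B_2^1$ cannot divide $S\times S$. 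This settles the implication.

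For the converse, ``$S\notin\mathbf{LDS}\Rightarrow S\times S$ has $B_2^1$ as a divisor'', I would unwind the definition of $\mathbf{LDS}$ to obtain an idempotent $e\in S$ with $eSe\notin\mathbf{DS}$. Applying Lemma~\ref{lem:ds} to $eSe$ gives that $(eSe)\times(eSe)$ has $B_2$ as a divisor; as $(eSe)\times(eSe)$ is a finite monoid with identity $(e,e)$, the auxiliary statement upgrades this to $B_2^1$ being a divisor of $(eSe)\times(eSe)$. Since $(eSe)\times(eSe)=(e,e)(S\times S)(e,e)$ is a subsemigroup of $S\times S$, transitivity of the divisibility relation then yields that $B_2^1$ divides $S\times S$, as required.

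It remains to prove the auxiliary statement, which is where I expect the real work to lie. The ``if'' part is immediate: restricting a surjection onto $B_2^1$ to the preimage of the subsemigroup $B_2$ exhibits $B_2$ as a divisor of $M$. For the ``only if'' part, let $\phi\colon T\to B_2$ be a surjective homomorphism from a subsemigroup $T\le M$ and let $1$ be the identity of $M$. The decisive point is that $1\notin T$: otherwise $\phi(1)$ would be a two-sided identity of $\phi(T)=B_2$, whereas $B_2$ has no identity element (no idempotent of $B_2$ acts as a two-sided identity, since the product of its two diagonal matrix units is the zero matrix). Therefore $T\cup\{1\}$ is a submonoid of $M$, and extending $\phi$ by sending $1$ to the adjoined identity of $B_2^1$ produces a surjective homomorphism onto $B_2^1$, witnessing that $B_2^1$ divides $M$ and completing the argument.
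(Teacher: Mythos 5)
Your proposal is correct and takes essentially the same approach as the paper: the first implication via $B_2^1\notin\mathbf{LDS}$ plus pseudovariety closure, and the converse via Lemma~\ref{lem:ds} applied to $eSe$, followed by adjoining the identity $(e,e)$ to the subsemigroup mapping onto $B_2$ and extending the homomorphism. The only difference is presentational --- you package the identity-adjunction step as a standalone statement about finite monoids, which the paper carries out inline.
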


\begin{proof}
For the `if' part, observe that $B^1_2\notin\mathbf{LDS}$. Indeed, $B^1_2$ is a local submonoid of itself, and the four matrix units in \eqref{eq:b21} form a $\mathrsfs D$-class that contains an idempotent matrix but is not closed under matrix multiplication. Now the claim follows from $\mathbf{LDS}$ being closed under forming finite direct products and taking divisors.

For the `only if' part, take an arbitrary finite semigroup $S\notin\mathbf{LDS}$. Then for some idempotent $e\in S$, the local submonoid $eSe$ does not belong to the pseudovariety $\mathbf{DS}$. By Lemma~\ref{lem:ds} we conclude that the monoid $T:=eSe\times eSe$ has the semigroup $B_2$ as a divisor. Consider a subsemigroup $U$ of $T$ such that there exists an onto homomorphism $\varphi\colon U\to B_2$. The identity element $f:=(e,e)$ of $T$ cannot belong to $U$ since otherwise its image $\varphi(f)$ would be an identity element in $B_2$, and $B_2$ has no identity element. The union $U'=U\cup\{f\}$ is a subsemigroup of $T$. We extend the homomorphism $\varphi$ to an onto map $\varphi'\colon U'\to B_2^1$, letting $\varphi'(f):=\begin{pmatrix}1 & 0\\ 0 & 1\end{pmatrix}$. Clearly, $\varphi'$ is a homomorphism whence  the monoid $B^1_2$ as a divisor of $T$ which is a submonoid of $S\times S$.
\end{proof}

\section{Main result}
\label{sec:main}

The paper~\cite{Sapir-Volkov-22} has promoted the idea of relativizing the property of being inherently \nfb\ (first suggested in~\cite{JaVo09} in the context of quasivarieties). If $\mathbf C$ is a class of semigroups, a  semigroup $T$ is called \emph{inherently \nfb\ relative to} $\mathbf C$ if every semigroup $S\in\mathbf C$ such that $T\in\var S$ is \nfb. Specializing $\mathbf C$, one gets various concepts that occur in the literature. For instance, the property of being inherently \nfb\ as considered by Mark Sapir in~\cite{Sa87a,Sa87b} arises when $\mathbf C$ consists of all semigroups that generate locally finite varieties. If $\mathbf C$ is the class of all finite semigroups, one gets the property of being \snfb\ discussed in Section~\ref{sec:intro2}.

Theorem~3.1 in~\cite{Sapir-Volkov-22} shows that the $i$-Catalan monoid $IC_4$ is inherently nonfinitely based relative to the class of all finite semigroups in which Green's relation $\mathrsfs{R}$ is trivial (that is, coincides with the equality relation). We strengthen this result in Theorem~\ref{T:EDS-infb} below, but first we provide a sufficient condition on a class of semigroups, under which $IC_4$ is inherently nonfinitely based relative to this class.

We fix a countably infinite set $\mathfrak A$ of variables. Denote by $\mathfrak A^+$ the set of all words whose variables lie in $\mathfrak A$ and let $\mathfrak A^*$ be $\mathfrak A^+$ with the empty word added. We assume that all words that we encounter below come from $\mathfrak A^+$.

Let $\mathbf{w}$ be a word. For $X\subseteq\alf(\mathbf{w})$, we denote by $\mathbf{w}(X)$ the word obtained from $\mathbf{w}$ by removing all occurrences of variables from $\alf(\mathbf{w})\setminus X$. An occurrence of a word $\mathbf{u}$ in a word $\mathbf{w}$ as a \emph{factor} is any decomposition of the form $\mathbf{w}=\mathbf{v}'\mathbf{u}\mathbf{v}''$ where the words $\mathbf{v}',\mathbf{v}''$ may be empty. If such a decomposition of $\mathbf{w}$ is unique, then we say that the factor $\mathbf{u}$ occurs in $\mathbf{w}$ once; otherwise, $\mathbf{u}$ occurs in $\mathbf{w}$ more than once.

\begin{proposition}
\label{P:general-infb}
Suppose that $\mathbf C$ is a class of semigroups and for each semigroup $S\in\mathbf C$ such that the $i$-Catalan monoid $IC_4$ belongs to the variety $\var S$, there exist an infinite sequence $\{\mathbf u_n\approx \mathbf v_n\}$ of identities holding in $S$ and an infinite sequence $\{X_n\}$ of sets of variables such that
\begin{itemize}
\item[\textup{(P0)}] $\mathbf u_n(X_n)\ne \mathbf v_n(X_n)$;
\item[\textup{(P1)}] for all variables $y,z$, the word $yz$ occurs in $\mathbf u_n(X_n)$ as a factor at most once;
\item[\textup{(P2)}] for every variable $z$, there are at least $n$ pairwise distinct variables between any two occurrences of $z$ in $\mathbf u_n(X_n)$.
\end{itemize}
Then the $i$-Catalan monoid $IC_4$ is inherently nonfinitely based relative to the class\/ $\mathbf C$.
\end{proposition}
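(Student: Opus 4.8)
The plan is to show that $IC_4$ is inherently nonfinitely based relative to $\mathbf{C}$ by proving that every semigroup $S\in\mathbf{C}$ with $IC_4\in\var S$ is nonfinitely based. So fix such an $S$. Since $IC_4\in\var S$, every identity holding in $S$ also holds in $IC_4$; in particular the identities $\mathbf{u}_n\approx\mathbf{v}_n$ furnished by the hypothesis hold in $IC_4$. The core of the argument will be to turn each pair $(\mathbf{u}_n,\mathbf{v}_n)$ into a witness that $S$ cannot be axiomatized by identities of bounded length, which is the standard route to nonfinite basedness (exhibit, for every $N$, an identity of $\Id S$ that is not derivable from the length-$\le N$ identities of $\Id S$).

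The key reduction is the projection onto $X_n$. First I would observe that the map $\mathbf{w}\mapsto\mathbf{w}(X_n)$ corresponds to substituting the empty word for the variables outside $X_n$, so that $\mathbf{u}_n(X_n)\approx\mathbf{v}_n(X_n)$ is a consequence of $\mathbf{u}_n\approx\mathbf{v}_n$ in the monoid $S^1$; since $IC_4$ is a monoid, these projected identities still hold in $IC_4$. Now conditions (P0)--(P2) are engineered to interact with Lemma~\ref{lem:sparseC5}: (P1) says no digram repeats in $\mathbf{u}_n(X_n)$, and (P2) forces at least $n$ distinct variables between consecutive occurrences of any repeated variable, so in particular any two occurrences of a repeated variable sandwich a variable that is linear in $\mathbf{u}_n(X_n)$. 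I would verify that (P1) and (P2) together guarantee that $\mathbf{u}_n(X_n)$ is \emph{sparse} in the precise sense defined before Lemma~\ref{lem:sparseC5}; the counting in (P2) is exactly what rules out the obstruction to sparseness.

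Granting sparseness, Lemma~\ref{lem:sparseC5} says $\mathbf{u}_n(X_n)$ is an isoterm for $IC_4$, so the only word $IC_4$-equal to it is itself. But (P0) says $\mathbf{u}_n(X_n)\ne\mathbf{v}_n(X_n)$ as words, while we have just seen that $IC_4$ satisfies $\mathbf{u}_n(X_n)\approx\mathbf{v}_n(X_n)$. This contradiction shows the premise is untenable unless the identities are genuinely nontrivial in $S$ but collapse to triviality in $IC_4$; the resource I actually extract is that the words $\mathbf{u}_n(X_n)$ have unboundedly long gaps between repeated letters (by (P2)), so no single finite identity of $\Id S$ can account for all of them. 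Concretely, I would assume toward a contradiction that $\Id S$ has a basis $\Sigma$ of identities each of length at most some $N$, choose $n>N$, and argue that the identity $\mathbf{u}_n(X_n)\approx\mathbf{v}_n(X_n)\in\Id S$ cannot be derived from $\Sigma$: any application of a short identity from $\Sigma$ to $\mathbf{u}_n(X_n)$ is forced, by the unique-digram property (P1) and the wide spacing (P2), to act on a block of distinct linear variables, hence by sparseness and Lemma~\ref{lem:sparseC5} cannot move $\mathbf{u}_n(X_n)$ off itself in $IC_4$, yet the target $\mathbf{v}_n(X_n)$ is a different word.

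The main obstacle, and where I expect the real work to lie, is making this last derivation argument airtight: one must show that \emph{no} finite set of short identities from $\Id S$ can derive all of the $\mathbf{u}_n(X_n)\approx\mathbf{v}_n(X_n)$, which requires controlling how short identities can be applied inside a long sparse word. The isoterm property from Lemma~\ref{lem:sparseC5} gives the needed rigidity for $IC_4$, but transferring it back to a genuine nonfinite-basedness statement for $S$ itself is the delicate step; I anticipate this is handled by a critical-word or inference-length argument showing that the length-$N$ identities of $\Id S$ all hold in $IC_4$ on the relevant sparse words only trivially, so that their consequences cannot reach a word differing from the isoterm $\mathbf{u}_n(X_n)$ while $n$ outruns $N$. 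The conditions (P0)--(P2) are precisely the hypotheses that let Lemma~\ref{lem:sparseC5} be invoked uniformly along the sequence, which is what upgrades a single isoterm fact into inherent nonfinite basedness relative to $\mathbf{C}$.
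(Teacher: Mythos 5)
Your central claim --- that (P1) and (P2) ``guarantee that $\mathbf u_n(X_n)$ is sparse'' --- is false, and the whole argument collapses at that point. Condition (P2) supplies many pairwise \emph{distinct} variables between two occurrences of a repeated variable, not a \emph{linear} one, and these are very different things: in the intended application (Theorem~\ref{T:EDS-infb}) every variable of $\mathbf u_n(X_n)$ occurs at least $n+1$ times, so $\mathbf u_n(X_n)$ has no linear variables at all and is as far from sparse as possible. In fact your own text exposes the problem: if $\mathbf u_n(X_n)$ were sparse, then by Lemma~\ref{lem:sparseC5} it would be an isoterm for $IC_4$, while $IC_4$ (being a monoid in $\var S$) satisfies $\mathbf u_n(X_n)\approx\mathbf v_n(X_n)$, forcing $\mathbf u_n(X_n)=\mathbf v_n(X_n)$ and contradicting (P0). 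You notice this contradiction and then wave it away (``the premise is untenable unless\dots''), but it cannot be waved away; it simply shows that sparseness of $\mathbf u_n(X_n)$ is the wrong thing to prove. The remainder of your proposal is an outline of what a proof should accomplish (``no finite set of short identities can derive\dots''), with the key step explicitly deferred (``I anticipate this is handled by\dots''), so the proof is not there.

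The actual argument applies sparseness to a different word. One works with the invariant $\theta$: ``$\mathbf w(X_n)=\mathbf u_n(X_n)$'', which $\mathbf u_n$ has and $\mathbf v_n$ lacks by (P0), and shows that applying any identity $\mathbf s\approx\mathbf t$ of $S$ in fewer than $n-2$ variables to a word $\mathbf u$ with $\mathbf u(X_n)=\mathbf u_n(X_n)$ preserves $\theta$. Writing $\mathbf u=\varphi(\mathbf s)$, one sets $Y_n:=\{z\in\alf(\mathbf s)\mid \alf(\varphi(z))\cap X_n\ne\emptyset\}$ and proves that $\mathbf s(Y_n)$ --- the projected left-hand side of the \emph{short} identity, not $\mathbf u_n(X_n)$ --- is sparse: if $z$ is repeated in $\mathbf s(Y_n)$ then $\varphi(z)$ occurs twice as a factor of $\mathbf u$, so (P1) forces $\varphi(z)(X_n)$ to be a single variable; a minimal non-sparse configuration in $\mathbf s(Y_n)$ would then produce two occurrences of a variable in $\mathbf u_n(X_n)$ separated by fewer than $n$ variables (here the bound on the number of variables of $\mathbf s$ is essential), contradicting (P2). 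Only now does Lemma~\ref{lem:sparseC5} enter: $\mathbf s(Y_n)$ is an isoterm for $IC_4$, and since $\mathbf s(Y_n)\approx\mathbf t(Y_n)$ holds in $IC_4$, we get $\mathbf t(Y_n)=\mathbf s(Y_n)$, hence $\mathbf v(X_n)=\mathbf u(X_n)=\mathbf u_n(X_n)$. Thus the isoterm rigidity is used on the bounded-size identities being applied, where (P1) and (P2) genuinely yield sparseness, rather than on the highly non-sparse words $\mathbf u_n(X_n)$ themselves; this interplay between the size bound on $\mathbf s\approx\mathbf t$ and the spacing parameter $n$ is exactly the missing idea in your proposal.
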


\begin{proof}
We have to verify that each semigroup $S\in\mathbf C$ such that $IC_4\in\var S$ is nonfinitely based. For this, it suffices to exhibit a property $\theta$ of words such that
\begin{itemize}
\item[\textup{(i)}] the word $\mathbf u_n$ has the property $\theta$, while the word $\mathbf v_n$ does not have the property $\theta$;
\item[\textup{(ii)}] for an arbitrary identity $\mathbf u_n \approx \mathbf u$ of $S$ such that the word $\mathbf u$ has the property $\theta$, an application of any identity of $S$ in less than $n-2$ variables to the word $\mathbf u$ preserves the property $\theta$.
\end{itemize}
Indeed, a standard syntactic argument (see~\cite[Section~4]{Volkov-01} or~\cite[Fact~2.1]{Sapir-15}) then implies that for each $n$, the identity $\mathbf u_n\approx \mathbf v_n$ cannot be inferred from identities in less
less than $n-2$ variables holding in $S$. Therefore, no finite set of identities holding in $S$ can infer all identities of this semigroup. 

We show that the following property $\theta$ is relevant: a word $\mathbf w$ has $\theta$ if $\mathbf w(X_n)=\mathbf u_n(X_n)$. Evidently,~(i) holds by the property (P0). It remains to verify~(ii) provided that $IC_4\in\var S$.

Let $\mathbf u_n \approx \mathbf u$ be an identity of $S$ such that $\mathbf u(X_n)=\mathbf u_n(X_n)$. We need to establish that if a word $\mathbf v$ is obtained from $\mathbf u$ by an application of some identity $\mathbf s\approx\mathbf t$ of $S$ in less than $n-2$ variables, then $\mathbf v(X_n)=\mathbf u_n(X_n)$. Obtaining $\mathbf v$ from $\mathbf u$ by an application of $\mathbf s\approx\mathbf t$ means that $\mathbf u=\mathbf c\,\varphi(\mathbf s)\,\mathbf d$ and $\mathbf v=\mathbf c\,\varphi(\mathbf t)\,\mathbf d$ for some $\mathbf c,\mathbf d\in \mathfrak A^\ast$ and some substitution $\varphi\colon\alf(\mathbf s\mathbf t)\to \mathfrak A^+$.

Take two variables $c,d\notin\alf(\mathbf s\mathbf t)$. The identity $\mathbf s\approx \mathbf t$ implies each of the identities $c\,\mathbf s\,d\approx c\,\mathbf t\,d$, $c\,\mathbf s\approx c\,\mathbf t$, and $\mathbf s\,d\approx \mathbf t\,d$. If the words $\mathbf c$ and $\mathbf d$ are nonempty, then $\mathbf u=\psi(c\,\mathbf s\,d)$ and $\mathbf v=\psi(c\,\mathbf t\,d)$, where $\psi\colon \alf(\mathbf s\mathbf t\,cd)\to \mathfrak A^+$ is the substitution given by $\psi(c):=\mathbf c$, $\psi(d):=\mathbf d$ and $\psi(x):=\varphi(x)$ for each $x\in\alf(\mathbf s\mathbf t)$. Similarly, if one of the words $\mathbf c$ and $\mathbf d$ is empty while the other is not, then the words $\mathbf u$ and $\mathbf v$ are images of either the words $c\,\mathbf s$ and respectively $c\,\mathbf t$ or the words $\mathbf s\, d$ and respectively $\mathbf t\,d$ under a suitable substitution. It follows that we may assume without any loss that $\mathbf u=\varphi(\mathbf s)$ and $\mathbf v=\varphi(\mathbf t)$, and $\mathbf s\approx\mathbf t$ is an identity of $S$ in less than $n$ variables.

{Let
\[
Y_n:=\{z\in\alf(\mathbf s\mathbf t)\mid \alf(\varphi(z))\cap X_n\ne \varnothing\}.
\]
Let us verify that the word $\mathbf s(Y_n)$ is sparse. Indeed, for every repeated variable $y$ of $\mathbf s$, the word $\varphi(y)$ occurs as a factor in $\mathbf u$ more than once. In view of the property~(P1), we see that for each variable $y\in Y_n$ repeated in $\mathbf s$, the word $\varphi(y)(X_n)$ must be a single variable $x\in X_n$, say. Now choose two occurrences of ${_1}y$ and ${_2}y$ of $y$ in $\mathbf s$ and let ${_1}x$ and ${_2}x$ be the corresponding occurrences of $x$ in $\mathbf u$. By the property~(P2) there are at least $n$ pairwise distinct variables from $X_n$ between ${_1}x$ and ${_2}x$ in $\mathbf u$. Since $|\alf(\mathbf s)| < n$ and $Y_n$ is the set of all variables whose images under $\varphi$ contain variables from $X_n$, there must be a variable $t\in Y_n \cap\alf(\mathbf s)$ such that $\varphi(t)$ involves at least two variables in $X_n$. In view of the property~(P1), the variable $t$ must be linear in $\mathbf s$. Therefore, the word $\mathbf s(Y_n)$ is sparse.

Since $IC_4\in\var S$, the identity $\mathbf s\approx \mathbf t$ holds in $IC_4$. As $IC_4$ is a monoid, so does the identity $\mathbf s(Y_n)\approx \mathbf t(Y_n)$ since removing all occurrences of variables from $\alf(\mathbf{st})\setminus Y_n$ has the same effect as substituting the identity element of $IC_4$ for these variables. By Lemma~\ref{lem:sparseC5} every sparse word is an isoterm for the $i$-Catalan monoid $IC_4$. It follows that $\mathbf t(Y_n)=\mathbf s(Y_n)$. Hence $\mathbf v(X_n)=\mathbf u(X_n)=\mathbf u_n(X_n)$, as required.}
\end{proof}

\begin{theorem}
\label{T:EDS-infb}
The i-Catalan monoid $IC_4$ is inherently nonfinitely based relative to the pseudovariety $\mathbf{LDS}$.
\end{theorem}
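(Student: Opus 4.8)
The plan is to derive the theorem from Proposition~\ref{P:general-infb} applied to the class $\mathbf C=\mathbf{LDS}$. Thus, fixing an arbitrary $S\in\mathbf{LDS}$, I must exhibit an infinite family of identities $\{\mathbf u_n\approx\mathbf v_n\}$ holding in $S$ together with sets $X_n$ of variables satisfying (P0)--(P2). Since Lemma~\ref{L:S-in-DS} applies only inside $\mathbf{DS}$, while $S$ merely has its local submonoids in $\mathbf{DS}$, the first step is to manufacture the identities inside the local submonoids of $S$ by an idempotent-framing device. I would choose an integer $\omega$ with $s^\omega$ idempotent for every $s\in S$ (e.g.\ $\omega=|S|!$) and let $M$ be the least common multiple of the exponents of the subgroups of $S$. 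For a fresh variable $z$ I set $\hat x:=z^\omega x z^\omega$ for each content variable $x$; then under any substitution $\varphi$ the element $e:=\varphi(z)^\omega$ is idempotent, $\varphi(\hat x)=e\varphi(x)e\in eSe$, and products of such elements stay in the local submonoid $eSe\in\mathbf{DS}$.

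Next I would build the core words. Put $K:=|S|+2$ and let $\mathbf w_n=B_1B_2\cdots B_K$ be a word over a content alphabet $X_n=\{x_1,\dots,x_{A}\}$ in which each block $B_i$ is a permutation of $X_n$, so that the blocks share the common alphabet $X_n$ and each $x_j$ occurs exactly $K$ times. Writing $\widehat{\mathbf w}_n$ for the word obtained from $\mathbf w_n$ by replacing every occurrence of $x$ by $\hat x$, I set $\mathbf u_n:=\widehat{\mathbf w}_n$ and $\mathbf v_n:=(\widehat{\mathbf w}_n)^{M+1}$. Applying Lemma~\ref{L:S-in-DS} to the word $y_1\cdots$ obtained by renaming $x_j\mapsto y_j$ (it has $K$ blocks of constant alphabet and $K-1=|S|+1>|eSe|$), and using that the least common multiple $m$ of the subgroup exponents of $eSe$ divides $M$ together with the fact that $\mathbf w\approx\mathbf w^{m+1}$ implies $\mathbf w\approx\mathbf w^{1+tm}$ for every $t$, one gets $eSe\models\widehat{\mathbf w}_n\approx(\widehat{\mathbf w}_n)^{M+1}$. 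Substituting $y_j\mapsto e\varphi(x_j)e$ and evaluating inside $eSe$ for each $\varphi$ then shows that $\mathbf u_n\approx\mathbf v_n$ holds in $S$. Because restricting to $X_n$ simply deletes the frame variable $z$, we obtain $\mathbf u_n(X_n)=\mathbf w_n$ and $\mathbf v_n(X_n)=\mathbf w_n^{\,M+1}\ne\mathbf w_n$, which is exactly (P0).

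It remains to choose the blocks so that $\mathbf w_n=\mathbf u_n(X_n)$ also satisfies (P1) and (P2), and I expect this to be the main obstacle, since the two requirements pull against each other: the pumping lemma forces a constant block alphabet and hence much repetition, whereas (P1) forbids any length-two factor $yz$ from recurring and (P2) demands at least $n$ distinct variables between consecutive occurrences of each variable. The hard part will be to resolve this tension by taking the alphabet size $A$ large compared with $n$ (say of order $Kn$) and selecting the permutations $B_1,\dots,B_K$, by a greedy or probabilistic argument, so that (i) the roughly $KA$ consecutive letter pairs occurring in $B_1\cdots B_K$ are pairwise distinct, which is feasible because only $KA\ll A^2$ of the $A(A-1)$ admissible ordered pairs are used, and (ii) each content variable sits at well-spread positions in consecutive blocks, guaranteeing at least $n$ distinct variables between its two neighbouring occurrences. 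Establishing the existence of such blocks for all large $n$ is the technical heart of the argument; once it is carried out, Proposition~\ref{P:general-infb} immediately yields that $IC_4$ is inherently nonfinitely based relative to $\mathbf{LDS}$.
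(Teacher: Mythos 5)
Your overall strategy is exactly the paper's: frame the content variables with idempotent powers of a fresh variable so that all values of the words land in a local submonoid $eSe\in\mathbf{DS}$, apply Lemma~\ref{L:S-in-DS} there to obtain $\mathbf u_n\approx\mathbf v_n=(\mathbf u_n)^{M+1}$ in $S$, and feed the resulting identities into Proposition~\ref{P:general-infb}. That part of your argument is sound, including the divisibility remark that $\mathbf w\approx\mathbf w^{m+1}$ yields $\mathbf w\approx\mathbf w^{tm+1}$. However, there is a genuine gap precisely where you locate the ``technical heart'': you never produce the blocks $B_1,\dots,B_K$ satisfying (P1) and (P2), you only assert their existence by an unspecified greedy or probabilistic argument. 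Moreover, the feasibility heuristic you offer does not carry the weight you put on it. If the $K$ permutation blocks are chosen uniformly at random over an alphabet of size $A$, then for two adjacent position pairs lying in distinct blocks the probability of carrying the same ordered pair of letters is about $1/A^2$, while the number of such pairs of position pairs is of order $K^2A^2$; hence the expected number of repeated two-letter factors is of order $K^2$, a constant that does \emph{not} tend to $0$ as $A$ grows. So the first-moment argument fails, and the observation that only $KA\ll A^2$ of the admissible pairs are used rules out a pigeonhole obstruction but proves nothing about existence. As written, the proof is incomplete at its decisive step.

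For comparison, the paper resolves the tension between the constant-alphabet block structure and (P1)--(P2) by a concrete, easily checkable construction: $\mathbf u_n=\prod_{i=0}^{n}\mathbf a_n[\pi^i]\mathbf b_n[\pi^i]$, where $\pi$ is the cyclic permutation $(01\cdots n)$, the $\mathbf a$-parts carry shifting letters $x_{0\pi^i},\dots,x_{n\pi^i}$, the $\mathbf b$-parts carry shifting letters $z_{0\pi^i},\dots,z_{n\pi^i}$, and both are interleaved with fixed letters $y_1,\dots,y_n$ and with the framing powers $x^k$. Then (P1) holds because every two-letter factor of $\mathbf u_n(X_n)$ involves a shifted letter whose index, together with the position type of the factor, pins down the block $i$ uniquely; and (P2) holds because the $\mathbf a$- and $\mathbf b$-parts alternate and have disjoint shifting alphabets, so between any two occurrences of a variable one always finds at least $n+1$ pairwise distinct letters. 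Your scheme could likely be completed in a similar explicit spirit (for instance, affine blocks $p\mapsto a_ip+b_i$ modulo a prime with pairwise distinct multipliers $a_i$ make all within-block two-letter factors distinct, since a factor determines the difference $a_i$), but some such construction, with a verification of (P2), must actually be supplied before Proposition~\ref{P:general-infb} can be invoked.
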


\begin{proof}
Take any $S\in\mathbf{LDS}$ such that the variety $\var S$ contains $IC_4$; we have to prove that $S$ is nonfinitely based.

Let $k=|S|!$; then the $k$th power of any element of $S$ is an idempotent. In view of Proposition~\ref{P:general-infb}, it suffices to find an infinite sequence $\{\mathbf u_n\approx \mathbf v_n\}$ of identities holding in $S$ and an infinite sequence $\{X_n\}$ of sets of variables such that the properties~(P0),~(P1) and~(P2) hold. We will show that the following are relevant:
\[
\mathbf u_n:=\prod_{i=0}^n\mathbf a_n[\pi^i]\mathbf b_n[\pi^i],\ \
\mathbf v_n:=(\mathbf u_n)^{k+1},\ \
X_n:=\{x_0,y_0,z_0,x_1,y_1,z_1,\dots,x_n,y_n,z_n\}
\]
where $\pi$ denotes the cyclic permutation $(01\cdots n)$ of the set $\{0,1,\dots,n\}$, and
\[
\begin{aligned}
&\mathbf a_n[\tau]:=x^kx_{0\tau}x^ky_1x^kx_{1\tau}x^ky_2x^kx_{2\tau}x^k\cdots x^ky_nx^kx_{n\tau}x^k,\\
&\mathbf b_n[\tau]:=x^kz_{0\tau}x^ky_1x^kz_{1\tau}x^ky_2x^kz_{2\tau}x^k\cdots x^ky_nx^kz_{n\tau}x^k
\end{aligned}
\]
for any permutation $\tau$ of $\{0,1,\dots,n\}$.

By the definitions of the identities $\mathbf u_n\approx \mathbf v_n$ and the sets $X_n$, the properties (P0), (P1) and (P2) hold for each $n$. Since 
\[
\alf(\mathbf a_n[\pi^0]\mathbf b_n[\pi^0])=\ldots=\alf(\mathbf a_n[\pi^i]\mathbf b_n[\pi^i])=\ldots=\alf(\mathbf a_n[\pi^n]\mathbf b_n[\pi^n]),
\]
Lemma~\ref{L:S-in-DS} implies that every local submonoid of $S$ satisfies the identity $\mathbf u_n(X_n)\approx \mathbf v_n(X_n)$ for all $n>|S|$. Since the $k$th power of any element of $S$ is an idempotent, this implies that the identity $\mathbf u_n\approx \mathbf v_n$ holds in $S$. Theorem~\ref{T:EDS-infb} is proved.
\end{proof}

Now it easy to deduce our main result. Recall that a semigroup is said to be \snfb{} if it is inherently nonfinitely based relative to the class of all finite semigroups.

\begin{theorem}
\label{T:strong}
The i-Catalan monoid $IC_4$ is strongly nonfinitely based.
\end{theorem}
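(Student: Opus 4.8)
The plan is to obtain Theorem~\ref{T:strong} as a short corollary of Theorem~\ref{T:EDS-infb} by a case analysis on whether the ambient semigroup lies in the pseudovariety $\mathbf{LDS}$. Recall from Section~\ref{sec:intro2} that being \snfb\ is exactly being inherently \nfb\ relative to the class of all finite semigroups; spelled out, $IC_4$ is \snfb\ precisely when every finite semigroup $S$ with $IC_4\in\var S$ is \nfb. (Indeed, a \fg\ variety is $\var S$ for some finite $S$, and such a variety is \fb\ iff $S$ is.) So I would fix an arbitrary finite $S$ with $IC_4\in\var S$ and show that $S$ is \nfb.

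If $S\in\mathbf{LDS}$, we are done at once: Theorem~\ref{T:EDS-infb} asserts that $IC_4$ is inherently \nfb\ relative to $\mathbf{LDS}$, so $S$ is \nfb. The remaining case is $S\notin\mathbf{LDS}$, and here I would appeal to Corollary~\ref{cor:lds}, which gives that $S\times S$ has the six-element Brandt monoid $B_2^1$ as a divisor. Since $S\times S\in\var S$ and varieties are closed under taking divisors, it follows that $B_2^1\in\var S$.

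The decisive input is now Mark Sapir's theorem \cite{Sa87a} that $B_2^1$ is \infb, i.e.\ lies in no \fb\ locally finite variety. The variety $\var S$ is locally finite because it is generated by the finite semigroup $S$, a byproduct of the HSP-theorem recalled in Section~\ref{sec:intro2}. Were $\var S$ \fb, it would be a \fb\ locally finite variety containing $B_2^1$, contradicting the inherently \nfb\ property of $B_2^1$. Hence $\var S$ is \nfb, and therefore so is $S$. Combining the two cases, every finite $S$ with $IC_4\in\var S$ is \nfb, which is precisely the assertion that $IC_4$ is \snfb.

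The genuine difficulty has already been discharged in Theorem~\ref{T:EDS-infb} and Corollary~\ref{cor:lds}; once these are in hand, the only point that demands a little care is that Corollary~\ref{cor:lds} delivers $B_2^1$ as a divisor of $S\times S$ rather than of $S$ itself. This is harmless, since one only needs membership of $B_2^1$ in the generated variety, and $\var S$ already contains the direct square $S\times S$ together with all of its divisors. Everything else is routine bookkeeping with the standard closure properties of varieties.
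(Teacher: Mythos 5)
Your proposal is correct and follows essentially the same route as the paper's own proof: the case split on $S\in\mathbf{LDS}$ versus $S\notin\mathbf{LDS}$, with Theorem~\ref{T:EDS-infb} handling the first case and Corollary~\ref{cor:lds} plus Sapir's result that $B_2^1$ is inherently nonfinitely based handling the second. You merely spell out two steps the paper leaves implicit (passing from $B_2^1$ dividing $S\times S$ to $B_2^1\in\var S$, and invoking local finiteness of $\var S$ to conclude nonfinite basability), which is fine.
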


\begin{proof}
Take any finite semigroup $S$ such that the variety $\var S$ contains $IC_4$; we have to prove that $S$ is nonfinitely based. If $S\in\mathbf{LDS}$, this follows from Theorem~\ref{T:EDS-infb}. If $S\notin\mathbf{LDS}$, then Corollary~\ref{cor:lds} implies that the variety $\var S$ contains the 6-element Brandt monoid $B_2^1$. Since $B_2^1$ is inherently \nfb\ \cite[Corollary 6.1]{Sa87a}, we conclude that $S$ is nonfinitely based is this case as well.
\end{proof}

It readily follows from the structural characterization of inherently \nfb\ semigroups \cite[Theorem 1]{Sa87b} that such a semigroup must have a non-singleton $\mathrsfs D$-class. Since all $\mathrsfs D$-classes of the $i$-Catalan monoid $IC_4$ are singletons, we conclude that $IC_4$ is not inherently \nfb. Thus, Theorem~\ref{T:strong} provides an example of a \snfb\ semigroup which is not \infb, answering the question from \cite{Volkov-01} quoted in Section~2.

\begin{remark}
Reviewing the proofs of Proposition~\ref{P:general-infb} and Theorems~\ref{T:EDS-infb} and~\ref{T:strong}, one sees that all our arguments rely on only two properties of $IC_4$: that $IC_4$ is a monoid and that every sparse word is an isoterm for $IC_4$. Therefore, any monoid for which every sparse word is an isoterm is \snfb. Using this, the first-named author has constructed a \snfb\ monoid with only 9 elements which is not \infb. This result will be published separately.
\end{remark}

\begin{remark}
We point out a subtle yet important difference between the concept of being \infb\ as considered in~\cite{Sa87a,Sa87b} and that of being \snfb. The difference comes from the fact that the local finiteness of a variety is
inherited by its subvarieties while the property of being finitely generated is not. Therefore, if a semigroup $S$ is not contained in any \fb\ locally finite semigroup variety, then $S$ is contained in no \fb{} locally finite variety $\mathbf V$ of groupoids---otherwise, the intersection of $\mathbf V$ with the variety of all semigroups would be a \fb\ locally finite variety of semigroups containing $S$. Thus, when we speak about \infb{} semigroups, it is unnecessary to specify within which class we work. In contrast, when we speak about \snfb{} semigroups, we should distinguish between the ``absolute'' case and the case when we work within the class of all semigroups. In the present paper we have only proved that every finitely generated \textbf{semigroup} variety containing the monoid $IC_4$ is \nfb. This does not exclude the possibility that some \fb\ finitely generated \textbf{groupoid} variety contains $IC_4$. The question of whether or not there exists a semigroup which, being not \infb, is \snfb{} relative to the class of all finite groupoids still remains open.

For a more detailed discussion of the property of being \snfb\ in a broader universal-algebraic context, we refer the reader to \cite[Section 1.1]{JaMcN11}. 
\end{remark}

\section{An application}
\label{sec:application}

Theorem~\ref{T:strong} can be applied to prove the absence of a finite \ib\ for many finite semigroups for which the FBP remained open so far. Here we restrict ourselves to just one application, resolving a question left open in~\cite{VoGo03}.

Let $T_n(q)$ stand for the semigroup of all upper triangular $n\times n$-matrices over the finite field with $q$ elements. In~\cite{VoGo03}, it was shown that the semigroup $T_n(q)$ is inherently infinitely based if and only if $q>2$ and $n>3$. Thus, semigroups of upper triangular matrices over the 2-element field turn out to be not \infb, but the question of whether or not they are \fb\ remained unsolved for 20 years, with the only exception of the 8-element semigroup $T_2(2)$ that was proved to be \fb\ in~\cite{ZhLL12}. Now we are in a position to answer the question for all $n>3$.

\begin{theorem}
\label{T:triangular}
For each $n>3$, the semigroup $T_n(2)$ of all upper triangular $n\times n$-matrices over the $2$-element field is \textup(strongly\textup) \nfb.
\end{theorem}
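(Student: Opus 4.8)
The plan is to reduce Theorem~\ref{T:triangular} to the already-established Theorem~\ref{T:strong} by exhibiting $IC_4$ inside the variety generated by $T_n(2)$. The cleanest route is to realize $IC_4$ as an honest subsemigroup of $T_n(2)$ via the classical rook-matrix (partial permutation matrix) representation of partial injections. To each partial injection $\alpha$ of $[4]$ I would associate the $4\times 4$ matrix $M_\alpha$ over the two-element field $\mathbb F_2$ whose $(i,j)$-entry is $1$ if $i\in\dom\alpha$ and $i\alpha=j$, and $0$ otherwise. Then $M_\alpha$ has at most one $1$ in each row and in each column. Because every element of $IC_4$ is extensive, i.e.\ $i\le i\alpha$, the unique $1$ in each nonzero row sits on or above the diagonal, so $M_\alpha$ is upper triangular; thus $M_\alpha\in T_4(2)$.

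First I would check that $\alpha\mapsto M_\alpha$ is an injective semigroup homomorphism. Injectivity is clear since $M_\alpha$ records exactly the graph of $\alpha$. For multiplicativity, writing maps on the right as in the paper, the $(i,k)$-entry of $M_\alpha M_\beta$ equals $\sum_j (M_\alpha)_{ij}(M_\beta)_{jk}$, and this sum has at most one nonzero term (namely $j=i\alpha$), so no cancellation can occur modulo $2$; the entry is $1$ exactly when $i\in\dom\alpha$, $i\alpha\in\dom\beta$ and $i\alpha\beta=k$, which is precisely the graph of the composite $\alpha\beta$. Hence $M_\alpha M_\beta=M_{\alpha\beta}$, and the image $\{M_\alpha\mid\alpha\in IC_4\}$ is a subsemigroup of $T_4(2)$ isomorphic to $IC_4$. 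For $n>4$ I would further embed $T_4(2)$ into $T_n(2)$ by sending a matrix $A$ to $\bigl(\begin{smallmatrix}A&0\\0&I_{n-4}\end{smallmatrix}\bigr)$, which is again upper triangular and multiplicative; composing the two maps gives $IC_4\le T_n(2)$ for every $n\ge 4$.

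Having embedded $IC_4$ as a subsemigroup of $T_n(2)$, I obtain $\var(IC_4)\subseteq\var(T_n(2))$, so in particular $IC_4\in\var(T_n(2))$. Theorem~\ref{T:strong} asserts that $IC_4$ is strongly nonfinitely based, i.e.\ it cannot lie in any finitely based finitely generated variety. If $T_n(2)$ were a member of some finitely based finitely generated variety $\mathbf V$, then $\var(T_n(2))\subseteq\mathbf V$ and hence $IC_4\in\mathbf V$, contradicting Theorem~\ref{T:strong}. Therefore $T_n(2)$ itself is strongly (and a fortiori) nonfinitely based, which is the assertion of the theorem.

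The argument is short once the embedding is in place, so the only point requiring genuine care---the main obstacle---is verifying that the rook representation stays faithful and multiplicative over $\mathbb F_2$ specifically: one must confirm that, because each row and column of a partial-permutation matrix carries at most one $1$, the entrywise sums defining the product never collapse two units to $0$ modulo $2$. With that observation the representation behaves over $\mathbb F_2$ exactly as over the integers, and extensiveness delivers upper-triangularity for free, so no finer divisor construction (direct powers or quotients) is needed.
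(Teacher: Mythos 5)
Your proposal is correct and follows essentially the same route as the paper: the same rook-matrix embedding $\alpha\mapsto M_\alpha$ of $IC_4$ into $T_4(2)$ (with extensiveness giving upper-triangularity and the at-most-one-nonzero-term observation guaranteeing no mod-$2$ cancellation), the same natural embedding of $T_4(2)$ into $T_n(2)$, and the same reduction to Theorem~\ref{T:strong}. The only cosmetic difference is that you spell out the block-diagonal embedding and the no-cancellation check, which the paper leaves as ``easy to verify.''
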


\begin{proof}
Due to Theorem~\ref{T:strong}, it suffices to show that for each $n>3$, the variety $\var T_n(2)$ contains the $i$-Catalan monoid $IC_4$. In fact, we construct an embedding $IC_4\to T_4(2)$; since $T_4(2)$ naturally embeds into $T_n(2)$ for all $n\ge4$, the claim will follow.

Recall that the monoid $IC_4$ consists of all extensive and order preserving partial injections of the chain $1<2<3<4$ into itself. Given any such partial injection $\alpha$, we define a $4\times4$-matrix $A:=(a_{ij})$ over the 2-element field by setting $a_{ij}:=\begin{cases}1&\text{if } i\alpha=j,\\0&\text{otherwise}\end{cases}$. Since $\alpha$ is extensive, $i\alpha=j$ implies $i\le j$ whence the matrix $A$ is upper triangular. Clearly, the map $\alpha\mapsto A$ is one-to-one, and it is easy to verify that the map is a homomorphism, using the fact that the image of $IC_4$ consists of row-monomial matrices so that one never adds two 1s when multiplying such matrices.
\end{proof}

\small

\end{document}